\newtheorem{thm}{Theorem}[section]
\newtheorem{lem}[thm]{Lemma}
\newtheorem{cor}[thm]{Corollary}
\theoremstyle{definition}
\newtheorem{example}[thm]{Example}
\newtheorem{defn}[thm]{Definition}
\theoremstyle{remark}
\newtheorem{rem}[thm]{Remark}        
\numberwithin{equation}{section}
\def\XXint#1#2#3{{\setbox0=\hbox{$#1{#2#3}{\int}$}
\vcenter{\hbox{$#2#3$}}\kern-.5\wd0}}
\newcommand{\R}{\mathbb{R}}
\newcommand{\m}{\mathtt{m}}
\newcommand{\Lip}{\mathsf{Lip}}
\newcommand{\di}{\mathsf{Diam}\,}
\newcommand{\vol}{\mathsf{vol}}
\newcommand{\RCD}{\mathsf{RCD}}
\newcommand{\CDC}{\mathsf{CD}}
\newcommand{\ul}{\underline}
\newcommand{\ol}{\overline}
\newcommand{\la}{\left\langle}
\newcommand{\ra}{\right\rangle}
\newcommand{\argmax}{\mathrm{argmax}\,}
\newcommand{\argmin}{\mathrm{argmin}\,}
\newcommand{\KD}{\mathsf{KD}}
\newcommand{\del}{\partial}
\newcommand{\iden}{\mbox{1}\hspace{-0.25em}\mbox{l}}
\newcommand{\calC}{\mathcal{C}}
\newcommand{\calD}{\mathcal{D}}
\newcommand{\calL}{\mathcal{L}}
\newcommand{\calR}{\mathcal{R}}
\newcommand{\bbS}{\mathbb{S}}
\newcommand{\ttT}{\mathtt{T}}
\newcommand{\ttb}{\mathtt{b}}
\newcommand{\sfb}{\mathsf{b}}
\def\@makefnmark{%
\leavevmode
\raise.9ex\hbox{\check@mathfonts
\fontsize\sf@size\z@\normalfont%
\@thefnmark}%
}
\title{Cheng's Maximal diameter theorem for hypergraphs}
\author{Yu Kitabeppu}
\address[Yu Kitabeppu]{Kumamoto University}
\email{ybeppu@kumamoto-u.ac.jp}
\author{Erina Matsumoto}
\address[Erina Matsumoto]{Kumamoto University}
\email{191d8010@st.kumamoto-u.ac.jp}
\keywords{hypergraph, coarse Ricci curvature}
\begin{document}
\maketitle
 \begin{abstract}
  We prove that Cheng's maximal diameter theorem for hypergraphs with positive coarse Ricci curvature.   
 \end{abstract}
%
%
\section{Introduction}
 It holds that the diameter of an $n$-dimensional Riemannian manifold with Ricci curvature bounded below by $K(n-1)>0$ is at most $\pi/\sqrt{K}$ by the famous Bonnet-Myers theorem. Moreover, under the same assumption and if $\di M=\pi/\sqrt{K}$, then $M$ is isometric to the $n$ dimensional sphere $\bbS^n(K^{-1/2})$, which is known as Cheng's maximal diameter theorem. These theorem have been extended to many other situations(see for instance \cite{BLsobolev,BQvol,BG,Stmms2,LLY,Limyers,LW,Bon,O,LV,Ohmcp,OSY,IKT} for Bonnet-Myers theorem and \cite{Kemax,CKKLMP,OSYmax,Ohpro,Kumax,Ruan} for rigidity theorem). Here we focus on Bonnet-Myers type and Cheng's maximal diameter theorems for $\CDC$ spaces and $\RCD$ spaces. 
 \par A synthetic notion of Ricci curvature bounded from below and dimension bounded from above on generic metric measure spaces, called the Curvature-Dimension condition $\CDC(K,N)$ ($K\in\R$, $N\in[1,\infty]$), was introduced by Sturm and Lott-Villani \cite{LV,Stmms1,Stmms2}. Though many geometric and analytic properties are proven such spaces, the class of $\CDC$ spaces is seen to be still huge. In fact, Finsler manifolds can become $\CDC$ spaces and the Laplacian and the heat flow with respect to the Cheeger energy on generic $\CDC$ spaces are possibly nonlinear. To restrict $\CDC$ spaces to be more a manageable class, $\RCD$ spaces are defined by satisfying $\CDC$ condition with the infinitesimally Hilbertian condition \cite{AGSRiem,AGMR,EKS,AMSnon}. The heat flow and the Laplacian become linear operators thanks to the infinitesimal Hilbertianity. It is proven that on infinitesimal Hilbertian space, $\CDC$ condition and Bakry-\'Emery type curvature-dimension condition coincides under mild assumptions \cite{AGSbercd}. For finite dimensional $\CDC$ spaces, Bonnet-Myers theorem holds \cite{LV,Stmms2}. On the other hand, the maximal diameter theorem have not been proven on $\CDC$ spaces(under the non-branching assumption, $\CDC$ spaces are $\mathsf{MCP}$ ones. Therefore a partial result is given in \cite{Ohpro}. On the other hand, $\mathsf{MCP}$ condition is not enough to guarantee the space being spherical suspension. see \cite{KeR}). However on $\RCD$ spaces, Cheng's maximal diameter theorem holds \cite{Kemax}. In \cite{Kemax}, the infinitesimal Hilbertianity(or the linearity of the Laplacian) seems to play an important role in the proof.
 \par There exist many notions of Ricci curvature on graphs(for example,  \cite{BHLLMY,Bon,BonS,EM,LLY,Maas,Munch1,Munch2,O,OSY,Schm}). The positivity of lower bound of Ricci curvature often leads the bounds of diameter. And sometimes Cheng's maximal diameter theorem is also given. For instance, the maximal diameter theorem on graphs and directed graphs under positive Ricci curvature in the sense of Lin-Lu-Yau are proven in \cite{CKKLMP,OSYmax}. Note that Laplacian related such notions are linear though sometimes those operator is not symmetric and the corresponding Dirichlet form is nonlocal. Hence can we guess the linearity of the Laplacian needs to guarantee holding the maximal diameter theorem?    
 \par In the present paper, we consider Cheng's maximal diameter theorem under positive bound of coarse Ricci curvature on hypergraphs. The first author, Ikeda, and Takai introduced a Lin-Lu-Yau's type coarse Ricci curvature notion on the class of weighted hypergraphs \cite{IKT}. Under positive Ricci curvature bound, we have Bonnet-Myers type diameter bound. Actually if $\inf_{x\neq y}\ul{\kappa}(x,y)\geq K>0$, $\di V\leq 2/K$ holds. Although our definition of Ricci curvature is based on the resolvent operator associated with the Laplacian that is nonlinear in general, we have the following theorem. 
 \begin{thm}
  Let $H:=(V,E,\omega)$ be a finite weighted hypergraph. Assume that $\inf_{x\neq y}\ul{\kappa}(x,y)\geq K>0$. Accordingly $\di V\leq 2/K$. Moreover assume $\di V=2/K$. Then there exists a pair of points $p,q\in V$ with $d(p,q)=\di V$ such that every point $v\in V$ is on a geodesic path connecting $p$ and $q$. Moreover if $x,y\in V$ are on the same geodesic path connecting $p$ to $q$, then $\ul{\kappa}(x,y)=\ol{\kappa}(x,y)=K$.  
 \end{thm}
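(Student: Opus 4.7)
The plan is to mimic the proof of Cheng's maximal diameter theorem in the $\RCD$ setting \cite{Kemax}, with the nonlinear resolvent $J_\alpha$ associated with the hypergraph Laplacian (from \cite{IKT}) playing the role of the linear heat semigroup. The strategy has three stages: (i) start from a pair $(p, q)$ realizing the diameter and use rigidity in the pairwise Bonnet-Myers estimate of \cite{IKT} to extract an extremal $1$-Lipschitz function $f$ whose contraction under $J_\alpha$ is exactly the rate predicted by $\ul{\kappa}(p, q) = K$; (ii) apply the curvature bound to the pairs $(p, v)$ and $(v, q)$ to force every vertex onto a $p$-$q$ geodesic; (iii) restrict the extremal function $f$ to subsegments and read off the definitions of $\ul{\kappa}$ and $\ol{\kappa}$ to squeeze them both from above by $K$.

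For (i), since $V$ is finite, the diameter is attained, so $p, q \in V$ with $d(p, q) = 2/K$ exist. The pairwise form of the Bonnet-Myers bound, namely $d(x, y) \leq 2/\ul{\kappa}(x, y)$, forces $\ul{\kappa}(p, q) = K$ and saturates every inequality in its derivation. In particular, the Kantorovich dual that witnesses $d(p, q)$ should be identifiable as a $1$-Lipschitz function $f$ with $f(q) - f(p) = d(p, q)$ whose $J_\alpha$-contraction is extremal. For (ii), fix $v \in V$ and combine the resolvent contractions on the pairs $(p, v)$ and $(v, q)$, both of which satisfy $\ul{\kappa} \geq K$, to deduce the reverse triangle estimate $d(p, v) + d(v, q) \leq 2/K$. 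Together with the ordinary triangle inequality this forces equality, so $v$ lies on some geodesic from $p$ to $q$. For (iii), take $x, y$ on a common geodesic from $p$ to $q$ with $x$ between $p$ and $y$, so $d(p, q) = d(p, x) + d(x, y) + d(y, q)$. Restricting the extremal $f$ from (i) to $(x, y)$ gives $f(y) - f(x) = d(x, y)$ together with the extremal contraction rate on this pair; the defining inequalities for $\ul{\kappa}(x, y)$ and $\ol{\kappa}(x, y)$, evaluated against this test function, then squeeze both quantities from above by $K$, and combined with the hypothesis $\ul{\kappa}(x, y) \geq K$ we conclude $\ul{\kappa}(x, y) = \ol{\kappa}(x, y) = K$.

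The main obstacle is step (ii): converting curvature bounds on the two pairs $(p, v)$ and $(v, q)$ into the single inequality $d(p, v) + d(v, q) \leq 2/K$. Because the hypergraph resolvent $J_\alpha$ is nonlinear, one cannot simply add contraction estimates as in the linear $\RCD$ case. The likely device is to test $J_\alpha$ iteratively on the distance function $d(p, \cdot)$, or on the extremal function $f$ from (i), at carefully chosen pairs along a $p$-$v$-$q$ broken geodesic, carefully tracking the Lipschitz loss at each stage. This will require a sharp form of the resolvent contraction estimate distilled from the tools of \cite{IKT}, and a verification that the equality case $\ul{\kappa}(p, q) = K$ indeed yields the extremal $f$ whose existence is used throughout.
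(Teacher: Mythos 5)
Your outline of stages (i) and (iii) is broadly consistent with the paper: the extremal test functions turn out to be nothing more abstract than the degree-weighted distance functions $\rho_p(x)=d_x d(p,x)$ and $\rho_q(x)=d_x d(x,q)$, and the rigidity on subsegments is obtained essentially as you describe, by applying the upper bound $\ol{\kappa}(v,w)\le \la \calL^0 \rho_p,\delta_w-\delta_v\ra/d(v,w)$ to the three pairs $(p,v)$, $(v,w)$, $(w,q)$ and telescoping against the saturated total $\la\calL^0\rho_p,\delta_q-\delta_p\ra=Kd(p,q)=2$. But stage (ii), which you yourself flag as the main obstacle, is genuinely missing, and the device you suggest --- iterating $J_\lambda$ along a broken $p$--$v$--$q$ geodesic while tracking Lipschitz loss --- is not how the difficulty is resolved; it is doubtful it can be made to work at all, precisely because of the nonlinearity you point out.

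The missing idea is that the argument is global rather than pointwise: one never proves $d(p,v)+d(v,q)\le 2/K$ for a fixed $v$ by combining two contraction estimates at that $v$. Instead, the curvature bound applied to the pairs $(p,v)$ and $(v,q)$ with the test functions $\rho_p$ and $\rho_q$ yields the pointwise inequalities $K\rho_p(v)\le\calL^0\rho_p(v)+d_v$ and $K\rho_q(v)\le\calL^0\rho_q(v)+d_v$. Writing the excess as $f(v)=\rho_p(v)+\rho_q(v)-Ld_v\ge 0$ (nonnegative by the triangle inequality) and summing over all $v\in V$, the Laplacian terms vanish by the mass-conservation identity $\la\calL^0 g,D\iden\ra=0$ --- the discrete analogue of $\int\Delta g\,dm=0$, proved in the paper from the subdifferential characterization of $\calL$ --- leaving $0\ge(2/L)\sum_v f(v)$. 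Hence $f\equiv 0$ and every vertex lies on a $p$--$q$ geodesic. This integration against the invariant measure is exactly what replaces the additivity available in the linear setting, and it is the ingredient your proposal lacks; without it the passage from two pairwise curvature bounds at $v$ to the single inequality $d(p,v)+d(v,q)\le 2/K$ remains unjustified.
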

%
%
\section{Preliminaries}
\subsection{Monotone operator}
 In this subsection, we give some results of the theory of maximal monotone operators that we need later. See \cite{Mi} for the detailed proofs or more reference. 
 \par Let $H$ be a Hilbert space with the inner product $\la \cdot,\cdot\ra$ and $A$ a multivalued operator, that is, $Ax\subset H$ for any $x$ in the domain of $A$, denoted by $\calD(A)$. We denote the range of $A$ by $\calR(A):=\{y\;;\;y\in Ax\text{ for some }x\in\calD(A)\}$. And we define the inverse of $A$ by $A^{-1}x=\{y\;;\;x\in Ay\}$ with $\calD(A^{-1})=\calR(A)$. 
 We say that $A$ is \emph{monotone} if 
 \begin{align}
  \la x'-y',x-y\ra\geq 0\notag
 \end{align} 
 holds for any $x,y\in\calD(A)$ and any $x'\in Ax$, $y'\in Ay$. A monotone operator $A$ is called an \emph{$\m$-monotone operator} if $\calR(I+\lambda A)=H$ for any $\lambda>0$, where $I$ is the identity operator on $H$. It is known that $A$ is an $\m$-monotone operator if and only if $A$ is a maximal monotone one. Here a monotone operator $A$ is said to be maximal if any monotone operator $B$ satisfying $\calD(A)\subset \calD(B)$ and $Ax\subset Bx$ for any $x\in\calD(A)$ coincides $A$.   
 \par From now on, we always assume that $A$ is an $\m$-monotone operator. It is known that $Ax$ is a closed convex subset in $H$ for any $x\in \calD(A)$. Hence we find a unique element $\tilde{x}=\argmin\{\Vert x'\Vert\;;\;x'\in Ax\}$. We call $\tilde{x}$ the \emph{canonical restriction} of $A$ and denote it by $A^0x$. Since $\calR(I+\lambda A)=H$, we are able to define the resolvent operators $J_{\lambda}:=(I+\lambda A)^{-1}$. One notable property of $J_{\lambda}$ is that is actually single-valued. We also know the following result. 
  \begin{thm}
   Let $A$ be an $\m$-monotone operator. Then we have a unique solution $u(t)$ to 
   \begin{align}
    \frac{d}{dt}u(t)\in -Au(t),\;\text{a.e. }t>0\quad u(0)=x\in\calD(A).\notag
   \end{align}
   Moreover it is equivalent to the existence of a unique solution to 
   \begin{align}
    \frac{d^+}{dt}u(t)=-A^0u(t),\;t\geq 0\quad u(0)=x\in\calD(A).\notag
   \end{align} 
   These two solutions are absolutely continuous and coincide with each other. 
  \end{thm}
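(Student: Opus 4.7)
The plan is to follow the classical Komura--Brezis strategy based on Yosida regularization. For $\lambda>0$, set $A_\lambda := \tfrac{1}{\lambda}(I-J_\lambda)$, where $J_\lambda=(I+\lambda A)^{-1}$ is the single-valued resolvent guaranteed by $\m$-monotonicity. The first preparatory step is to record the standard facts that $A_\lambda$ is everywhere defined on $H$, is monotone and $(1/\lambda)$-Lipschitz, satisfies the selection property $A_\lambda x\in A(J_\lambda x)$, and enjoys the uniform estimate $\Vert A_\lambda x\Vert\leq\Vert A^0 x\Vert$ with $A_\lambda x\to A^0 x$ as $\lambda\downarrow 0$ for every $x\in\calD(A)$. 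These follow from the definitions together with the fact that $Ax$ is closed convex, so that the minimal-norm element $A^0 x$ is well defined.

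Because $A_\lambda$ is globally Lipschitz, the Cauchy--Lipschitz theorem produces a unique $C^1$ solution $u_\lambda\colon[0,\infty)\to H$ of the regularized equation $u_\lambda'(t)=-A_\lambda u_\lambda(t)$, $u_\lambda(0)=x$. Next I would show that $\{u_\lambda\}$ is Cauchy uniformly on compact time intervals. Starting from
\begin{align}
\tfrac{1}{2}\tfrac{d}{dt}\Vert u_\lambda(t)-u_\mu(t)\Vert^2 = -\la A_\lambda u_\lambda(t)-A_\mu u_\mu(t),\,u_\lambda(t)-u_\mu(t)\ra,\notag
\end{align}
and decomposing $u_\lambda-u_\mu = (J_\lambda u_\lambda - J_\mu u_\mu)+\lambda A_\lambda u_\lambda - \mu A_\mu u_\mu$, the monotonicity of $A$ applied to the pair $A_\lambda u_\lambda\in A J_\lambda u_\lambda$, $A_\mu u_\mu\in A J_\mu u_\mu$ together with the a priori bound $\Vert A_\lambda u_\lambda(t)\Vert\leq\Vert A^0 x\Vert$ yields
\begin{align}
\Vert u_\lambda(t)-u_\mu(t)\Vert^2 \leq 2(\lambda+\mu)\,t\,\Vert A^0 x\Vert^2.\notag
\end{align}
Hence $u_\lambda\to u$ uniformly on compacta for some Lipschitz $u$ with Lipschitz constant $\Vert A^0 x\Vert$. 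Since $J_\lambda u_\lambda - u_\lambda=-\lambda A_\lambda u_\lambda\to 0$, we also get $J_\lambda u_\lambda(t)\to u(t)$, and on a.e. $t$ weak compactness furnishes a limit of $A_\lambda u_\lambda(t)$; the strong-weak closedness of the graph of $A$ (which is equivalent to maximality) identifies this limit as an element of $Au(t)$, so that $u'(t)\in -Au(t)$ a.e. Uniqueness of such a $u$ is immediate from monotonicity: two solutions $u,v$ satisfy $\tfrac{d}{dt}\Vert u-v\Vert^2\leq 0$ a.e., hence $u\equiv v$ when $u(0)=v(0)$.

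It remains to upgrade the a.e. inclusion into the pointwise right-derivative identity $\tfrac{d^+}{dt}u(t)=-A^0 u(t)$ and to prove the asserted equivalence. Differentiating $\Vert u_\lambda'(t)\Vert^2$ and using monotonicity of $A_\lambda$ shows that $t\mapsto\Vert u_\lambda'(t)\Vert$ is non-increasing; passing to the limit gives that $t\mapsto\Vert u'(t)\Vert$ is non-increasing on its set of definition, so it admits a right-continuous representative. Using that for each fixed $t$ the element $u'(t^+)$ lies in the closed convex set $-Au(t)$ and minimizes the norm among all such elements (a consequence of the selection $A_\lambda u_\lambda\to A^0 u$ on $\calD(A)$ together with lower semicontinuity of the norm under weak convergence), one concludes $\tfrac{d^+}{dt}u(t)=-A^0 u(t)$ for every $t\geq 0$. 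Conversely, any solution of the strong right-derivative equation is a fortiori a solution of the a.e. differential inclusion, and the two constructions clearly produce the same absolutely continuous curve.

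The main obstacle is the last step: the a.e.\ differential inclusion is soft and follows rather directly from the Yosida approximation, but showing that the right derivative exists at \emph{every} $t\geq 0$ and selects the minimal-norm element $A^0 u(t)$ requires exploiting the monotonicity structure more carefully, in particular the monotonicity of $t\mapsto\Vert u'(t)\Vert$ and the fact that the Yosida approximations pick out precisely the minimal section of $A$ in the limit.
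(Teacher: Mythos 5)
The paper does not prove this statement at all: it is quoted as a known result from the theory of nonlinear semigroups, with the proof deferred to Miyadera's book \cite{Mi}. Your proposal reconstructs exactly the classical Komura--Brezis argument via Yosida regularization $A_\lambda=\tfrac{1}{\lambda}(I-J_\lambda)$ that one finds in that reference, so there is no divergence of method to report. The existence-and-uniqueness half of your sketch is sound: the listed properties of $A_\lambda$ are standard, the Cauchy estimate $\Vert u_\lambda(t)-u_\mu(t)\Vert^2\leq 2(\lambda+\mu)t\Vert A^0x\Vert^2$ is correctly derived from the decomposition $u=J_\lambda u+\lambda A_\lambda u$ together with monotonicity, the demiclosedness of the graph of a maximal monotone operator identifies the limit, and uniqueness from $\tfrac{d}{dt}\Vert u-v\Vert^2\leq 0$ is immediate. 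Two caveats. First, a minor circularity of exposition: you invoke the a priori bound $\Vert A_\lambda u_\lambda(t)\Vert\leq\Vert A^0x\Vert$ in the Cauchy estimate before establishing the monotonicity of $t\mapsto\Vert u_\lambda'(t)\Vert$ that justifies it; this is easily repaired by proving the latter first (compare $u_\lambda(\cdot)$ with $u_\lambda(\cdot+h)$). Second, and more substantively, the final step --- that the right derivative exists at \emph{every} $t\geq 0$ and equals $-A^0u(t)$ --- is only gestured at. Knowing that $\Vert u'(t)\Vert$ has a monotone (hence right-continuous) representative does not by itself give existence of $\tfrac{d^+}{dt}u(t)$ as an element of $H$; the standard route is to first show $u(t)\in\calD(A)$ for all $t$ with $\Vert A^0u(t)\Vert$ non-increasing, then to use the integrated inequality $\tfrac12\Vert u(t)-v\Vert^2-\tfrac12\Vert u(s)-v\Vert^2\leq\int_s^t\la w,v-u(r)\ra\,dr$ for arbitrary $(v,w)$ in the graph of $A$, together with weak lower semicontinuity of the norm, to pin down the right derivative as the minimal section. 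You correctly identify this as the main obstacle, but as written it remains a gap rather than a proof; the rest of the argument is the standard and correct one.
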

 By the result above, we have the contraction semigroup with the generator $A^0$, that is, there exists a family of single-valued operators $\{T_t\}_{t>0}$ with $\calD(T_t)=\calD(A^0)$ such that 
 \begin{itemize}
  \item $T_0=I$, $T_{s+t}=T_sT_t$,
  \item For $x\in \calD(T_t)$, $T_tx$ is strongly continuous with respect to $t$, 
  \item $\Vert T_tx-T_ty\Vert\leq \Vert x-y\Vert$, 
  \item \begin{align}
   \lim_{h\downarrow 0}\frac{1}{h}(T_hx-x)=-A^0x\notag
  \end{align}
  holds for any $x\in \calD(A^0)$. 
 \end{itemize}
 In general $T_t$ is not a linear operator. 
 \begin{example}
  Let $E:H\rightarrow\R\cup\{\infty\}$ be a convex and lower semi-continuous functional on a real Hilbert space $(H,\la\cdot,\cdot\ra)$. Set $\calD(E):=\{f\in H\;;\;E(f)<\infty\}$. $h\in H$ is a sub-differential of $E$ at $f$ provided that 
  \begin{align}
   E(g)-E(f)\geq \la h,g-f\ra\notag
  \end{align}
  holds for any $g\in H$. The set of all sub-differentials of $E$ at $f$ is denoted by $\del E(f)$. Then it is known that $\del E:H\rightarrow 2^H$ is an $\m$-monotone operator defined on $\calD(\del E):=\{f\in\calD(E)\;;\;\del E(f)\neq\emptyset\}$. 
  \par For this case, we have another representation of $J_{\lambda}$. It actually satisfies 
  \begin{align}
   J_{\lambda}f=\argmin\left\{\frac{\Vert f-g\Vert^2}{2\lambda}+E(g)\;;\;g\in \calD(E)\right\}.\notag
  \end{align}
 \end{example}
\subsection{Hypergraph}
A hypergraph $H=(V,E)$ consists of a vertex set $V$ and a hyperedge set $E\subset 2^V$. Unlike in the case of a graph, $e\in E$ can include more than two vertices. We use the notation $x\sim y$ for $x,y\in V$ provided $x,y$ is included in a same hyperedge $e\in E$. For given two vertices $x,y\in V$, we define the distance between them by 
\begin{align}
 d(x,y):=\min\left\{n\;;\;x=x_0,\,x_i\sim x_{i+1},\,x_n=y\;\text{for }i=1,\cdots,n-1\right\}.\notag
\end{align}
A sequence of vertices $\{x_i\}_{i=0}^n$ that realizes the distance $d(x_0,x_n)$ is called a \emph{geodesic path}. 
When $\vert V\vert<\infty$, $H=(V,E)$ is called a finite hypergraph. A weight function $\omega:E\rightarrow \R_{>0}$ is denoted by $\omega_e$ or $\omega(e)$ for $e\in E$. The \emph{weighted degree} $d_x$ at $x\in V$ is defined by $d_x:=\sum_{e\ni x}\omega_e$. Set a matrix $D=(D_{u,v})_{u,v\in V}\in \R^{V\times V}$ by 
\begin{align}
 D_{u,v}:=\begin{cases}
  d_u&\text{if }u=v,\\
  0&\text{otherwise},
 \end{cases}\notag
\end{align}
and call it the \emph{(weighted) degree matrix}. 
In this article, we consider only finite weighted hypergraphs. 
\smallskip
\par Since $\vert V\vert<\infty$, a function $f:V\rightarrow\R$ is identified with a column vector $f\in \R^V$. An inner product for $f,g\in \R^V$ is defined by 
\begin{align}
 \la f,g\ra:=\sum_{x\in V}f(x)g(x)d_x^{-1}.\notag
\end{align}
We define a function $\delta_v\in \R^V$ for $v\in V$ by 
\begin{align}
 \delta_v(u)=
 \begin{cases}
  1&\text{ if }u=v,\\
  0&\text{ otherwise}.
 \end{cases}\notag
\end{align}
Given $f\in\R^V$, the \emph{base polytope $B_e$} for $e\in E$ is defined as 
\begin{align}
 B_e=\mathrm{Conv}\left\{\delta_u-\delta_v\;;\; u,v\in e\right\}\subset \R^V.\notag
\end{align}
We say a function $f:V\rightarrow \R$ \emph{weighted $K$-Lipschitz} for $K>0$ if $\la f,\delta_x-\delta_y\ra\leq Kd(x,y)$ holds for any $x,y\in V$ and denote the set of weighted $K$-Lipschitz functions by $\Lip_{\omega}^K(V)$. 
\bigskip
\par We define the \emph{Laplacian} on the functions defined on $V$ as a multivalued operator by 
\begin{align}
 \calL f:=\left\{\sum_{e\in E}\omega_e\sfb_e^{\ttT}(D^{-1}f)\sfb_e\;;\;\sfb_e\in\argmax_{\sfb\in B_e}{\sfb^{\ttT}(D^{-1}f)}\right\},\notag
\end{align}
here $\sfb^{\ttT}$ is the transpose vector of $\sfb$. 
The following theorem is known. 
 \begin{thm}{\cite{IMTY,Y}}
  The Laplacian $\calL$ is an $\m$-monotone operator on the Hilbert space $(\R^V,\la\cdot,\cdot\ra)$. 
 \end{thm}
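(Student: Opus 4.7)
The plan is to realize $\calL$ as the subdifferential of an explicit convex continuous energy on $(\R^V,\la\cdot,\cdot\ra)$ and then invoke the preceding Example: the subdifferential of any proper convex lower semi-continuous functional on a Hilbert space is automatically an $\m$-monotone operator.

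First I would define
\begin{align}
 E(f):=\frac{1}{2}\sum_{e\in E}\omega_e\bigl(h_{B_e}(D^{-1}f)\bigr)^2,\notag
\end{align}
where $h_{B_e}(y):=\max_{\sfb\in B_e}\sfb^{\ttT}y=\max_{u\in e}y_u-\min_{v\in e}y_v$ is the support function of the base polytope $B_e$. Since $0\in B_e$ (take $u=v$), $h_{B_e}\geq 0$. The map $f\mapsto h_{B_e}(D^{-1}f)$ is the composition of a linear map with a support function, hence convex and nonnegative, and squaring a nonnegative convex function preserves convexity; so $E$ is a nonnegative finite convex function on the finite-dimensional space $\R^V$. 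In particular it is automatically continuous with $\calD(E)=\R^V$.

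The heart of the proof is the identification $\partial E=\calL$. For each $e$ the classical Euclidean subdifferential of the support function is $\partial_{\mathrm{Euc}}h_{B_e}(y)=\argmax_{\sfb\in B_e}\sfb^{\ttT}y$. Applying the chain rule through the linear map $f\mapsto D^{-1}f$ and through the squaring operation (with the nonnegative prefactor $h_{B_e}(D^{-1}f)$), and then summing via the Moreau--Rockafellar sum rule (which applies since each summand is everywhere continuous), I would compute
\begin{align}
 \partial_{\mathrm{Euc}}E(f)=\biggl\{\sum_{e\in E}\omega_e\,h_{B_e}(D^{-1}f)\,D^{-1}\sfb_e\;;\;\sfb_e\in\argmax_{\sfb\in B_e}\sfb^{\ttT}D^{-1}f\biggr\}.\notag
\end{align}
The crucial bookkeeping is translating from the Euclidean product to $\la u,v\ra=u^{\ttT}D^{-1}v$: a direct comparison of the defining inequalities $E(g)\geq E(f)+h^{\ttT}(g-f)$ and $E(g)\geq E(f)+h^{\ttT}D^{-1}(g-f)$ yields $\partial E(f)=D\cdot\partial_{\mathrm{Euc}}E(f)$. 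Multiplying through, the factors $D$ and $D^{-1}$ cancel and what remains is exactly the set $\{\sum_e\omega_e(\sfb_e^{\ttT}D^{-1}f)\sfb_e\}$ defining $\calL f$.

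The only real obstacle I foresee is precisely this bookkeeping for the weighted inner product: a naive Euclidean computation would be off by the diagonal matrix $D$ and the identification with $\calL$ would fail. Once this is handled, $\calL=\partial E$ is exact (the degenerate case $h_{B_e}(D^{-1}f)=0$ causes no trouble, since the prefactor $0$ absorbs any ambiguity in the argmax), and $\m$-monotonicity of $\calL$ follows immediately from the Example.
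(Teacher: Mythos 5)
Your proof is correct, and it is in fact more self-contained than what the paper provides: the paper cites \cite{IMTY,Y} for this theorem and gives no proof of it. The closest internal argument is the Remark that follows the theorem, and it runs in the opposite logical direction to yours: there the authors introduce the same energy $E$, verify only the inclusion $\calL f\subset \del E(f)$ by estimating $\la f',g-f\ra\leq E(g)-E(f)$ directly with Cauchy--Schwarz and Young's inequality in the $W$-weighted inner product on $\R^E$, and then upgrade the inclusion to the equality $\calL=\del E$ by invoking the maximality of $\calL$ --- that is, by using the very theorem under discussion as an input. You instead establish $\del E=\calL$ outright by convex calculus (subdifferential of a support function is the exposed face, chain rule through the linear map $D^{-1}$ and through $t\mapsto \tfrac{1}{2}t^{2}$ on $[0,\infty)$, Moreau--Rockafellar sum rule, whose qualification hypotheses hold trivially because every summand is finite and continuous on a finite-dimensional space), and only then deduce $\m$-monotonicity from the Example on subdifferentials of convex lower semi-continuous functionals. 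The one genuinely delicate point, which you correctly flag and handle, is the weighted inner product $\la u,v\ra=u^{\ttT}D^{-1}v$: it forces $\del E(f)=D\,\del_{\mathrm{Euc}}E(f)$, and this factor of $D$ cancels the $D^{-1}$ produced by the chain rule, leaving exactly the elements $\sum_{e}\omega_e(\sfb_e^{\ttT}D^{-1}f)\sfb_e$ of $\calL f$; the degenerate case $h_{B_e}(D^{-1}f)=0$ causes no ambiguity since $0$ is then the unique subgradient of that squared term. In short, your approach buys an actual proof of the theorem within the paper, whereas the paper's Remark buys a quick identification $\calL=\del E$ only after the theorem has been imported from the references.
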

Hence we define the resolvent operator $J_{\lambda}:\R^V\rightarrow \R^V$ and the contraction semigroup $h_t:\R^V\rightarrow \R^V$, that are both single-valued. We call $\{h_t\}_{t>0}$ the \emph{heat semigroup} or \emph{heat flow}. One holds the following property, that is, 
\begin{align}
 \calL(af)=a\calL f,\;J_{\lambda}(af)=aJ_{\lambda}f,\;h_t(af)=ah_t(f)\label{eq:homoge}
\end{align}
for $f\in\R^V$, $a\in\R$. 
 \begin{rem}
  Actually $\calL$ is defined as the sub-differential of a functional on $\R^V$. Let $n=\vert V\vert$, $N:=\vert E\vert$. Define a functional 
  \begin{align}
   E(f):=\frac{1}{2}\sum_{e\in E}\omega_e\left(\max_{u\in e}D^{-1}f(u)-\min_{v\in e}D^{-1}f(v)\right)^2.\notag
  \end{align}
  For any $f'=\sum_{e\in E}\omega_e\sfb_e^{\ttT}(D^{-1}f)\sfb_e\in\calL f$, $\sfb_e\in\argmax_{\sfb\in B_e}\sfb^{\ttT}(D^{-1}f)$, we denote it by $f'=BWB^{\ttT}\tilde f$, where $B=[\sfb_{e_1},\cdots,\sfb_{e_N}]\in \R^{n\times N}$, $W=\mathrm{diag}(\omega_{e_1},\cdots,\omega_{e_N})\in \R^{N\times N}$, $\tilde f=D^{-1}f\in\R^V=\R^n$. 
  \begin{align}
   E(f)=\frac{1}{2}\sum_{e\in E}\omega_e\left(\sfb_e^{\ttT}(D^{-1}f)\right)^2=\frac{1}{2}(\tilde f)^{\ttT}BWB^{\ttT}\tilde f=\frac{1}{2}\la BWB^{\ttT}\tilde f,f\ra=\frac{1}{2}\la f',f\ra\notag
  \end{align}
  holds. The continuity and the convexity of $E$ are clear. For $k,h\in\R^E$, we define $\la h,k\ra_W:=h^{\ttT}Wk$. It is an inner product that satisfies $\la f',f\ra=\la B^{\ttT}\tilde f,B^{\ttT}\tilde f\ra_W$. Then for any $g\in \R^V$, $g'=B_1WB_1^{\ttT}\tilde g\in \calL g$, it holds 
  \begin{align}
   \la f',g-f\ra&=\la f',g\ra-\la f',f\ra=\la B^{\ttT}\tilde f,B^{\ttT}\tilde g\ra_W-\la f',f\ra\notag\\
   &\leq \Vert B^{\ttT}\tilde f\Vert_W\Vert B^{\ttT}\tilde g\Vert_W-\la f',f\ra\notag\\
   &\leq \frac{1}{2}\Vert B^{\ttT}\tilde f\Vert_W^2+\frac{1}{2}\Vert B^{\ttT}\tilde g\Vert_W^2-\la f',f\ra\notag\\
   &\leq \frac{1}{2}\Vert B_1^{\ttT}\tilde g\Vert^2_W-\frac{1}{2}\Vert B^{\ttT}\tilde f\Vert_W^2\notag\\
   &=E(g)-E(f).\notag
  \end{align}
  This means $f'\in\del E(f)$. Accordingly $\calL f\subset \del E(f)$. Since both $\calL$ and $\del E$ are maximal monotone operators, $\calL=\del E$.  
 \end{rem}
 $\iden\in \R^V$ is defined by $\iden(x)=1$ for any $x\in V$. 
\begin{lem}
 For any $f\in \R^V$ and any $f'\in \calL f$, 
 \begin{align}
  \la f',D\iden\ra=0.\notag
 \end{align}
\end{lem}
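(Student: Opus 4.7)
The plan is to reduce the inner product $\la f', D\iden\ra$ to a simple sum over $V$ and then show that each summand built from $\sfb_e$ contributes zero, because the base polytope $B_e$ consists of vectors whose coordinates sum to zero.

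First I would unfold the definition of the inner product. Since $D$ is the diagonal degree matrix, $(D\iden)(x)=d_x$, and therefore
\begin{align}
\la f',D\iden\ra=\sum_{x\in V}f'(x)\,d_x\,d_x^{-1}=\sum_{x\in V}f'(x).\notag
\end{align}
Hence the lemma reduces to showing that any element of $\calL f$ has total sum zero over the vertex set.

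Next I would expand $f'$ using the description of $\calL f$. Write $f'=\sum_{e\in E}\omega_e\,\sfb_e^{\ttT}(D^{-1}f)\,\sfb_e$ for some choice of $\sfb_e\in\argmax_{\sfb\in B_e}\sfb^{\ttT}(D^{-1}f)$. Summing over $x$ gives
\begin{align}
\sum_{x\in V}f'(x)=\sum_{e\in E}\omega_e\bigl(\sfb_e^{\ttT}(D^{-1}f)\bigr)\sum_{x\in V}\sfb_e(x),\notag
\end{align}
so it suffices to show that $\sum_{x\in V}\sfb(x)=0$ for every $\sfb\in B_e$.

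This last step is immediate from the definition of the base polytope. Every generator $\delta_u-\delta_v$ of $B_e$ satisfies $\sum_{x\in V}(\delta_u(x)-\delta_v(x))=1-1=0$, and convex combinations preserve this property. Thus $\sum_{x\in V}\sfb_e(x)=0$ for each $e\in E$, the displayed sum vanishes, and the lemma follows. There is no real obstacle here; the only point to be careful about is matching the weighted inner product to the ordinary sum over $V$, which is precisely why the weight $D\iden$ appears on the right-hand side.
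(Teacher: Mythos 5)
Your proof is correct, but it takes a different route from the paper's. You argue directly from the explicit description of $\calL f$: after observing that the weighted inner product against $D\iden$ collapses to the plain sum $\sum_{x\in V}f'(x)$, you expand $f'=\sum_{e}\omega_e\bigl(\sfb_e^{\ttT}(D^{-1}f)\bigr)\sfb_e$ and use the fact that every element of the base polytope $B_e=\mathrm{Conv}\{\delta_u-\delta_v\}$ has coordinate sum zero. The paper instead exploits the identification $\calL=\del E$ established in the preceding remark: since $E$ depends only on $\max_{u\in e}D^{-1}f(u)-\min_{v\in e}D^{-1}f(v)$ and $D^{-1}(f\pm D\iden)=D^{-1}f\pm\iden$, one has $E(f\pm D\iden)=E(f)$, and the sub-differential inequality applied in both directions gives $\la f',\pm D\iden\ra\leq 0$. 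Your argument is more elementary in that it never invokes the sub-differential structure, only the combinatorial definition of the Laplacian and the base polytopes; the paper's argument is shorter given that machinery and would generalize verbatim to any convex functional invariant under adding multiples of $D\iden$. Both are complete proofs of the lemma.
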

 \begin{proof}
  By the definition of the sub-differentials, we have 
  \begin{align}
   0=E(f+D\iden)-E(f)\geq \la f',D\iden\ra,\notag\\
   0=E(f-D\iden)-E(f)\geq \la f',-D\iden\ra.\notag
  \end{align}
  Combining these inequalities leads the consequence. 
 \end{proof}
\subsection{Coarse Ricci curvature on hypergraphs} 
Let $H=(V,E,\omega)$ be a weighted finite hypergraph. We define the new metric $\KD_{\lambda}(x,y)$ on $V$, which plays a role of the $L^1$-Wasserstein distance between random walks with the initial distributions $\delta_x$ and $\delta_y$ respectively. 
 \begin{defn}[Nonlinear Kantorovich difference \cite{IKT}]
  Given $\lambda>0$, $x,y\in V$. The \emph{$\lambda$-nonlinear Kantorovich difference} $\KD_{\lambda}(x,y)$ is defined as 
  \begin{align}
   \KD_{\lambda}(x,y):=\sup\left\{\la J_{\lambda}f,\delta_x-\delta_y\ra\;;\;f\in\Lip_{\omega}^1(V)\right\}.\notag
  \end{align}
 \end{defn}
 \begin{rem}
  Each $\KD_{\lambda}$ is a metric on $V$(\cite{IKT}). They are defined to imitate the Kantorovich-Rubinstein duality formula. We take a devious route not to tackle a problem to determine what is the canonical random walk on a hypergraph. 
  Relations between $\KD_{\lambda}$ and $\KD_{\mu}$ for $\lambda,\mu>0$ are still unclear. 
 \end{rem}
 We define a coarse Ricci curvature as a similar way to Lin-Lu-Yau's coarse Ricci curvature on graphs. 
 \begin{defn}[Coarse Ricci curvature \cite{IKT}]
  $\lambda$-coarse Ricci curvature $\kappa_{\lambda}(x,y)$ ($x,y\in V$) is defined by 
  \begin{align}
   \kappa_{\lambda}(x,y):=1-\frac{\KD_{\lambda}(x,y)}{d(x,y)}.\notag
  \end{align}
  The upper coarse Ricci curvature $\ol{\kappa}$ is defined by 
  \begin{align}
   \ol{\kappa}(x,y):=\limsup_{\lambda\rightarrow +0}\frac{\kappa_{\lambda}(x,y)}{\lambda}.\notag
  \end{align}
  Similarly, the lower coarse Ricci curvature $\ul{\kappa}$ is defined by 
  \begin{align}
   \ul{\kappa}(x,y):=\liminf_{\lambda\rightarrow+0}\frac{\kappa_{\lambda}(x,y)}{\lambda}.\notag
  \end{align}
 \end{defn}
 \begin{rem}
  For a positive constant $a>0$, hypergraphs $H=(V,E,\omega)$ and $H'=(V,E,a\omega)$ have the same curvature. Indeed, $\la\cdot,\cdot\ra'$, $J_{\lambda}'$, $\KD_{\lambda}'$ and $\Lip_{a\omega}^1(V)$ is the inner product, the resolvent operator, $\lambda$-nonlinear Kantorovich difference and the set of weighted 1-Lipschitz functions on $H'$ respectively. It is easy to verify $J_{\lambda}f=J_{\lambda}'f$ and $f\in\Lip_{\omega}^1(V)$ iff $af\in \Lip_{a\omega}^1(V)$. Therefore, for $f'=af\in\Lip_{a\omega}^1(V)$ with $f\in \Lip_{\omega}^1(V)$, we have 
  \begin{align}
  \la J_{\lambda}'(f'),\delta_x-\delta_y\ra'&=a^{-1}\la J_{\lambda}(af),\delta_x-\delta_y\ra\notag\\
  &=\la J_{\lambda}f,\delta_x-\delta_y\ra.\notag
  \end{align}
  Hence $\KD'_{\lambda}(x,y)=\KD_{\lambda}(x,y)$. Accordingly $\lambda$-coarse Ricci curvature coincides. 
 \end{rem}
 We have the following formula.  
 \begin{lem}
  Let $H=(V,E,\omega)$ be a weighted hypergraph. Then 
  \begin{align}
   \ol{\kappa}(x,y)\leq \frac{1}{d(x,y)}\calC(x,y),\label{eq:anotherrepolkappa}
  \end{align}
  where 
  \begin{align}
   \calC(x,y)=\inf\left\{\la \calL^0 f,\delta_x-\delta_y\ra\;;\;f\in \Lip_{\omega}^1(V),\, \la f,\delta_x-\delta_y\ra=1 \right\}.\notag
  \end{align}
 \end{lem}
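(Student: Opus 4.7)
The plan is to exploit a first-order expansion of the resolvent $J_\lambda$ via the Yosida approximation of $\calL$. For $f\in\Lip_\omega^1(V)$ with $\la f,\delta_x-\delta_y\ra=1$, I would rescale to $F:=d(x,y)\cdot f$ using the homogeneity relations in \eqref{eq:homoge}, so that $\la F,\delta_x-\delta_y\ra=d(x,y)$, $J_\lambda F=d(x,y) J_\lambda f$, and $\calL^0 F=d(x,y)\calL^0 f$; in effect, $F$ is the natural test function saturating the weighted Lipschitz inequality at $(x,y)$ and carrying the correct scale for the resolvent expansion.

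Because $\calL$ is an $\m$-monotone operator on the finite-dimensional Hilbert space $(\R^V,\la\cdot,\cdot\ra)$, the classical theory recalled in Section~2.1 applies: the Yosida approximation $v_\lambda:=(F-J_\lambda F)/\lambda$ satisfies $v_\lambda\in\calL(J_\lambda F)$ for every $\lambda>0$ and converges strongly to $\calL^0 F$ as $\lambda\downarrow 0$. Hence
\[
\la J_\lambda F,\delta_x-\delta_y\ra = d(x,y) - \lambda\la v_\lambda,\delta_x-\delta_y\ra.
\]
Combining with the trivial lower bound $\KD_\lambda(x,y)\geq\la J_\lambda f,\delta_x-\delta_y\ra$ (since $f\in\Lip_\omega^1$) and translating by homogeneity to the rescaled quantity, one extracts the estimate $d(x,y)-\KD_\lambda(x,y)\leq\lambda\la v_\lambda,\delta_x-\delta_y\ra + o(\lambda)$. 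Dividing by $\lambda\,d(x,y)$, taking $\limsup_{\lambda\downarrow 0}$, and using $v_\lambda\to\calL^0 F$ yields
\[
\ol\kappa(x,y)\leq \frac{\la\calL^0 F,\delta_x-\delta_y\ra}{d(x,y)}=\la\calL^0 f,\delta_x-\delta_y\ra;
\]
taking infimum over admissible $f$ and keeping track of the overall factor $1/d(x,y)$ from the rescaling closes the proof.

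The main obstacle is the normalization: the rescaled $F=d(x,y)f$ generically lies in $\Lip_\omega^{d(x,y)}$ rather than $\Lip_\omega^1$, so it cannot be plugged directly into the supremum that defines $\KD_\lambda$. I would handle this by using $f$ itself as the test function in $\KD_\lambda$ and then invoking the homogeneity of $J_\lambda$ (equation \eqref{eq:homoge}) to pull back the saturated first-order behavior of $J_\lambda F$; the $D\iden$-invariance of $\calL$ (observed just after Remark \ref{thm:\@currentlabel} in Section~2.2) may also be used to shift baselines if convenient. Carefully threading the factor $d(x,y)$ through this scaling is the delicate accounting that converts the Yosida limit on the $F$-side into the stated bound in the $f$-normalization.
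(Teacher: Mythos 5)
The central estimate you claim, $d(x,y)-\KD_\lambda(x,y)\le\lambda\la v_\lambda,\delta_x-\delta_y\ra+o(\lambda)$, does not follow from your setup. The only test function you are allowed to insert into the supremum defining $\KD_\lambda$ is $f$ itself, and by the homogeneity \eqref{eq:homoge} you get $\la J_\lambda f,\delta_x-\delta_y\ra=\tfrac{1}{d(x,y)}\la J_\lambda F,\delta_x-\delta_y\ra=1-\tfrac{\lambda}{d(x,y)}\la v_\lambda,\delta_x-\delta_y\ra$. Hence what you actually obtain is
\[
d(x,y)-\KD_\lambda(x,y)\le \bigl(d(x,y)-1\bigr)+\frac{\lambda}{d(x,y)}\la v_\lambda,\delta_x-\delta_y\ra,
\]
and the leftover constant $d(x,y)-1$ is $O(1)$, not $o(\lambda)$; dividing by $\lambda$ and letting $\lambda\downarrow0$ gives $+\infty$ whenever $d(x,y)>1$. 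The normalization issue you flag as ``delicate accounting'' is therefore not a technicality but the point at which the argument breaks: a test function with $\la f,\delta_x-\delta_y\ra=1$ only yields $\KD_\lambda(x,y)\ge 1-O(\lambda)$, whereas one needs $\KD_\lambda(x,y)\ge d(x,y)-O(\lambda)$, and $F=d(x,y)f$ cannot be inserted into $\KD_\lambda$ because it is only $d(x,y)$-Lipschitz. There is a second, independent discrepancy at the end: even granting your estimate, you land on $\ol{\kappa}(x,y)\le\la\calL^0 f,\delta_x-\delta_y\ra$, which is weaker by a factor $d(x,y)$ than the asserted $\tfrac{1}{d(x,y)}\la\calL^0 f,\delta_x-\delta_y\ra$; ``keeping track of the overall factor'' does not produce it.

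The argument that does work uses a competitor $g\in\Lip_{\omega}^1(V)$ that \emph{saturates} the Lipschitz bound at the pair $(x,y)$, i.e.\ $\la g,\delta_x-\delta_y\ra=d(x,y)$. Then the Yosida expansion you set up applies with no rescaling: $\KD_\lambda(x,y)\ge\la J_\lambda g,\delta_x-\delta_y\ra=d(x,y)-\lambda\la A_\lambda g,\delta_x-\delta_y\ra$ with $A_\lambda g=(g-J_\lambda g)/\lambda\to\calL^0 g$, whence $\kappa_\lambda(x,y)/\lambda\le\la A_\lambda g,\delta_x-\delta_y\ra/d(x,y)$ and, in the limit, $\ol{\kappa}(x,y)\le\la\calL^0 g,\delta_x-\delta_y\ra/d(x,y)$. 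Note that this is exactly how the lemma is invoked in the proof of Theorem \ref{thm:mainCheng}, where the competitor $\rho_p$ satisfies $\la\rho_p,\delta_v-\delta_p\ra=d(p,v)$, not $1$; the normalization $\la f,\delta_x-\delta_y\ra=1$ in the displayed definition of $\calC(x,y)$ is not the one compatible with that use (rescaling $f=g/d(x,y)$ puts $f$ in $\Lip_{\omega}^{1/d(x,y)}(V)$, a strictly smaller class, and shifts the infimum by a factor of $d(x,y)$). So the version you should prove --- and the only one the resolvent expansion delivers --- carries the normalization $\la f,\delta_x-\delta_y\ra=d(x,y)$; for the statement as literally printed your proof, and the natural one, both fail to close.
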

 \begin{lem}\label{lem:Lapbound}
  Let $f\in \Lip_w^1(V)$ be a weighted 1-Lipschitz function and $f'\in \calL f$. Then $\vert f'(x)\vert\leq d_x$ holds for any $x\in V$.  
 \end{lem}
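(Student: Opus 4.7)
The approach is to unfold the definition of $\calL f$ and estimate $|f'(x)|$ coordinate by coordinate using two elementary bounds on the pieces of the decomposition. First, write $f' = \sum_{e \in E} \omega_e c_e \sfb_e$ with $\sfb_e \in \argmax_{\sfb \in B_e} \sfb^{\ttT}(D^{-1}f)$ and $c_e := \sfb_e^{\ttT}(D^{-1}f)$, so that for each $x \in V$,
\[
 f'(x) = \sum_{e \in E} \omega_e\, c_e\, \sfb_e(x).
\]

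Next I would collect two facts about these pieces. Since $B_e = \mathrm{Conv}\{\delta_u - \delta_v : u, v \in e\}$, every element of $B_e$ is a convex combination of vectors supported in $e$ whose entries lie in $\{-1, 0, 1\}$; in particular $\sfb_e(x) = 0$ whenever $x \notin e$, and $|\sfb_e(x)| \leq 1$ otherwise. Moreover, the linear functional $\sfb \mapsto \sfb^{\ttT}(D^{-1}f)$ attains its maximum on $B_e$ at a vertex $\delta_{u^*} - \delta_{v^*}$ with $u^* \in \argmax_{u \in e} f(u)/d_u$ and $v^* \in \argmin_{v \in e} f(v)/d_v$; the value of $\sfb_e^{\ttT}(D^{-1}f)$ is the same for any $\sfb_e$ in the argmax. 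Consequently $c_e = f(u^*)/d_{u^*} - f(v^*)/d_{v^*} = \la f, \delta_{u^*} - \delta_{v^*}\ra \geq 0$, and since $u^*, v^*$ both lie in $e$ one has $d(u^*, v^*) \leq 1$, so the weighted $1$-Lipschitz hypothesis gives $c_e \leq d(u^*,v^*) \leq 1$.

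Combining the two bounds, for any $x \in V$,
\[
 |f'(x)| \leq \sum_{e \ni x} \omega_e\, c_e\, |\sfb_e(x)| \leq \sum_{e \ni x} \omega_e = d_x,
\]
which is the claim. The argument is essentially a direct unpacking of the definition of $\calL$; the only point that needs care is verifying that the coefficients $c_e$ lie in $[0,1]$ rather than just in $[-1,1]$, which requires both the maximality of $\sfb_e$ (for nonnegativity) and the Lipschitz assumption together with $d(u^*,v^*)\leq 1$ (for the upper bound). I do not foresee a further technical obstacle.
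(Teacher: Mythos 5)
Your proof is correct and complete: the decomposition $f'=\sum_e \omega_e c_e \sfb_e$, the observations that $\sfb_e$ is supported in $e$ with entries in $[-1,1]$, and the bound $0\le c_e=\la f,\delta_{u^*}-\delta_{v^*}\ra\le d(u^*,v^*)\le 1$ from maximality plus the weighted $1$-Lipschitz condition are exactly what is needed, and you correctly note that $c_e$ is well defined even when the argmax is attained on a face rather than at a vertex. The paper itself states this lemma without proof (deferring to the reference [IKT]), and your argument is the natural direct unpacking one would expect there, so there is nothing to flag.
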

\begin{thm}[Bonnet-Myers type theorem]\label{thm:BMthm}
 Let $H=(V,E,\omega)$ be a weighted hypergraph with positive upper coarse Ricci curvature bound $\inf_{u\neq v}\ol{\kappa}(u,v)\geq K>0$. Then $\di H\leq 2/K$.  
\end{thm}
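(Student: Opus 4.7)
The plan is to combine the two auxiliary lemmas stated immediately before the theorem. The Lipschitz-Laplacian bound \ref{lem:Lapbound} says that any 1-Lipschitz representative has $|\mathcal{L}^0 f(x)|\le d_x$, which after pairing against $\delta_p-\delta_q$ gives a universal estimate $\mathcal{C}(p,q)\le 2$. Feeding this into the inequality $\overline{\kappa}(p,q)\cdot d(p,q)\le\mathcal{C}(p,q)$ from \eqref{eq:anotherrepolkappa}, together with the hypothesis $\overline{\kappa}(p,q)\ge K$, immediately yields $d(p,q)\le 2/K$.

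Concretely, fix two vertices $p\ne q\in V$ (otherwise there is nothing to prove). I would construct a feasible competitor for the infimum defining $\mathcal{C}(p,q)$ as follows: let $g(z):=d(q,z)/d(p,q)$, which is $(1/d(p,q))$-Lipschitz on $(V,d)$ and therefore also 1-Lipschitz because $d(p,q)\ge 1$. Set $f:=Dg$, so that
\begin{align}
\langle f,\delta_x-\delta_y\rangle = g(x)-g(y)\le d(x,y)\notag
\end{align}
for all $x,y\in V$; hence $f\in\Lip_\omega^1(V)$. Moreover $\langle f,\delta_p-\delta_q\rangle = g(p)-g(q)=1$, so $f$ is admissible in the definition of $\mathcal{C}(p,q)$.

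Next apply Lemma \ref{lem:Lapbound} to $f':=\mathcal{L}^0 f$: one has $|f'(x)|\le d_x$ for every $x\in V$, so that
\begin{align}
\langle f',\delta_p-\delta_q\rangle = \frac{f'(p)}{d_p}-\frac{f'(q)}{d_q}\le 1-(-1)=2.\notag
\end{align}
Therefore $\mathcal{C}(p,q)\le 2$. Combining with \eqref{eq:anotherrepolkappa} and the curvature hypothesis gives
\begin{align}
K\cdot d(p,q)\le \overline{\kappa}(p,q)\cdot d(p,q)\le\mathcal{C}(p,q)\le 2,\notag
\end{align}
hence $d(p,q)\le 2/K$. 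Taking the supremum over $p,q\in V$ yields $\di V\le 2/K$.

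The only step requiring any thought is the construction in the first paragraph, where one must normalize a natural distance-type Lipschitz function so that the pairing with $\delta_p-\delta_q$ equals exactly $1$; the rest is a direct unpacking of the inner product and invocation of the two preceding lemmas. I do not foresee a genuine obstacle.
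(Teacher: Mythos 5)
Your proof takes the same route as the paper's: pair a distance-type weighted $1$-Lipschitz function against $\delta_p-\delta_q$, bound $\langle \calL^0 f,\delta_p-\delta_q\rangle$ by $2$ via Lemma \ref{lem:Lapbound}, and conclude through \eqref{eq:anotherrepolkappa}. The one genuine issue is the normalization of your competitor. The constraint ``$\langle f,\delta_x-\delta_y\rangle=1$'' in the displayed definition of $\calC(x,y)$, which you rely on, cannot be the intended one: unwinding the definition of $\ol{\kappa}$, for a fixed $f\in\Lip_{\omega}^1(V)$ one gets
\begin{align}
\frac{\kappa_{\lambda}(x,y)}{\lambda}\leq\frac{d(x,y)-\langle f,\delta_x-\delta_y\rangle}{\lambda\,d(x,y)}+\frac{\langle A_{\lambda}f,\delta_x-\delta_y\rangle}{d(x,y)},\notag
\end{align}
with $A_{\lambda}=(I-J_{\lambda})/\lambda$ the Yosida approximation, and the first term stays bounded as $\lambda\downarrow 0$ only if $\langle f,\delta_x-\delta_y\rangle=d(x,y)$. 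That is also the normalization the paper actually uses, both in its own proof of this theorem (``if $\langle f,\delta_p-\delta_q\rangle=d(p,q)$'') and in the proof of Theorem \ref{thm:mainCheng}, where the competitors satisfy $\langle\rho_p,\delta_v-\delta_p\rangle=d(p,v)$. Your $f=Dg$ with $g=d(q,\cdot)/d(p,q)$ satisfies $\langle f,\delta_p-\delta_q\rangle=1$ rather than $d(p,q)$, so it is admissible only for the (apparently mistyped) ``$=1$'' version of $\calC$, and the chain $K\leq\ol{\kappa}(p,q)\leq\calC(p,q)/d(p,q)\leq 2/d(p,q)$ does not follow from a correct form of \eqref{eq:anotherrepolkappa}.

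The repair is one line: do not normalize, i.e.\ take $f:=Dd(q,\cdot)$. This is weighted $1$-Lipschitz by the triangle inequality, satisfies $\langle f,\delta_p-\delta_q\rangle=d(q,p)-d(q,q)=d(p,q)$, and Lemma \ref{lem:Lapbound} still yields $\langle\calL^0 f,\delta_p-\delta_q\rangle\leq 2$; the rest of your argument then goes through verbatim and gives $d(p,q)\leq 2/K$ for every pair, hence $\di H\leq 2/K$.
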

 The proof is already given in \cite{IKT}. Since some techniques used in the proof are useful even for the main theorem, we show the outline of the proof.   
 \begin{proof}
  Let $p,q\in V$ be points such that $d(p,q)=\di H$. For any $f\in \Lip_{\omega}^1(V)$, $\vert\calL^0f(x)\vert\leq d_x$ by Lemma \ref{lem:Lapbound}. Then if $\la f,\delta_p-\delta_q\ra=d(p,q)$, 
  \begin{align}
   K\leq \ol{\kappa}(p,q)\leq \frac{\la \calL^0f,\delta_p-\delta_q\ra}{d(p,q)}\leq \frac{\la \vert \calL^0f\vert,\delta_p+\delta_q\ra}{\di H}\leq \frac{2}{\di H}.\notag
  \end{align} 
 \end{proof}
 \begin{rem}
  Since $\ul{\kappa}(u,v)\leq \ol{\kappa}(u,v)$ holds for any $u,v\in V$, the assumption in Theorem \ref{thm:BMthm} can be replaced by $\inf_{u\neq v}\ul{\kappa}(u,v)\geq K$ instead of the lower bound of upper coarse Ricci curvature. 
 \end{rem}
%
%
\section{Proof of main theorem}
We prove the following theorem. 
 \begin{thm}\label{thm:mainCheng}
  Let $H=(V,E,\omega)$ be a weighted hypergraph with positive lower coarse Ricci curvature bound $\inf_{u\neq v}\ul{\kappa}(u,v)\geq K>0$. Assume $\di H=2/K$. Then there exists a pair of points $p,q\in V$ such that every point $x\in V$ is on a geodesic path from $p$ to $q$. Moreover if $x,y\in V$ are on the same geodesic path. Then $\ul{\kappa}(x,y)=\ol{\kappa}(x,y)\equiv K$. 
 \end{thm}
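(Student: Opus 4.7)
The plan is to emulate the proof of Theorem \ref{thm:BMthm} using two symmetric test functions, propagate the resulting Laplacian identities to pairs of neighbors along geodesics, and then convert these point-wise bounds into global rigidity via the identity $\sum_x(\calL^0 g)(x)=\la\calL^0 g,D\iden\ra=0$. Fix $p,q\in V$ with $d(p,q)=\di V=2/K$ and consider the two weighted $1$-Lipschitz functions
\[
f(z):=d_z\,d(q,z),\qquad \tilde f(z):=-d_z\,d(p,z),
\]
which the triangle inequality places in $\Lip_\omega^1(V)$ and which both satisfy $\la f,\delta_p-\delta_q\ra=\la\tilde f,\delta_p-\delta_q\ra=d(p,q)$. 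Feeding either one into the inequality chain in the proof of Theorem \ref{thm:BMthm} and invoking $d(p,q)=2/K$ forces every inequality there to be an equality; in particular $(\calL^0 f)(p)=d_p$, $(\calL^0 f)(q)=-d_q$, and the same identities hold for $\tilde f$.

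The same BM-style bound $\ol{\kappa}(x,y)\leq\la\calL^0 f,\delta_x-\delta_y\ra/d(x,y)$ applies to every pair for which $f$ (or $\tilde f$) saturates the weighted Lipschitz condition. If $x\sim y$ with $d(q,y)=d(q,x)-1$ then $\la f,\delta_x-\delta_y\ra=1=d(x,y)$, so combining this bound with the hypothesis $\ul{\kappa}(x,y)\geq K$ yields $(\calL^0 f)(x)/d_x-(\calL^0 f)(y)/d_y\geq K$. Telescoping along a geodesic from $x$ down to $q$ gives
\[
(\calL^0 f)(x)/d_x\geq K\,d(q,x)-1\quad\text{for every }x\in V,
\]
and the symmetric argument with $\tilde f$, iterated along a geodesic from $x$ down to $p$, gives $(\calL^0\tilde f)(x)/d_x\leq 1-K\,d(p,x)$ for every $x\in V$.

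The crux is to globalize. Multiplying both estimates by $d_x$, summing, and using $\sum_x(\calL^0 g)(x)=0$ (from the lemma in Subsection 2.2 applied to $g=f$ and $g=\tilde f$) produces $K\sum_x d_x d(q,x)\leq\sum_x d_x$ and $K\sum_x d_x d(p,x)\leq\sum_x d_x$; adding them and comparing with the triangle inequality $d(p,x)+d(q,x)\geq d(p,q)=2/K$ forces
\[
\sum_x d_x\bigl(d(p,x)+d(q,x)\bigr)=\tfrac{2}{K}\sum_x d_x.
\]
Since every $d_x>0$, this yields $d(p,x)+d(q,x)=\di V$ for all $x$, so every vertex lies on a geodesic from $p$ to $q$. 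Equality in the sums also upgrades each point-wise bound to an equality, giving $(\calL^0 f)(x)/d_x=K\,d(q,x)-1$ for all $x$. Finally, if $x,y$ lie on a common geodesic (say in the order $p,x,y,q$), then $d(q,x)-d(q,y)=d(x,y)$ so $f$ saturates the pair $(x,y)$ and
\[
\ol{\kappa}(x,y)\leq\frac{(\calL^0 f)(x)/d_x-(\calL^0 f)(y)/d_y}{d(x,y)}=K,
\]
which together with $\ul{\kappa}(x,y)\geq K$ yields the rigidity $\ul{\kappa}(x,y)=\ol{\kappa}(x,y)=K$.

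I expect the third paragraph to be the most delicate step: the symmetric use of \emph{both} $f$ and $\tilde f$ is essential because the nonlinearity of $\calL$ precludes combining the two bounds at the level of the Laplacian, so the argument must be closed through the global sum identity $\sum_x(\calL^0 g)(x)=0$.
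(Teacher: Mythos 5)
Your proposal is correct and follows essentially the same route as the paper: you test the curvature bound \eqref{eq:anotherrepolkappa} against the weighted distance functions to $p$ and $q$ (your $f=\rho_q$ and $\tilde f=-\rho_p$, equivalent via the homogeneity \eqref{eq:homoge}), obtain the pointwise estimates corresponding to \eqref{eq:key1}--\eqref{eq:key2}, and close the argument globally with $\la\calL^0 g,D\iden\ra=0$, exactly as in the paper's proof of Theorem \ref{thm:mainCheng}. The only cosmetic differences are that you reach $\calL^0 f(x)/d_x\geq Kd(q,x)-1$ by telescoping over neighbors rather than applying the bound directly to the pair $(x,q)$, and you phrase the conclusion as a forced equality of sums rather than a contradiction.
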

 \begin{proof}
  Let $p,q\in V$ be points such that $d(p,q)=\di H=2/K=:L$. Define functions $\rho_p$, $\rho_q$, $R:V\rightarrow \R$ by 
  \begin{align}
   \rho_p(x):=d_xr_p(x):=d_xd(p,x), \;\rho_q(x):=d_xd(x,q),\; R(x):=Ld_x.\notag
  \end{align}
  Put $\calL^0\rho_p=\sum_{e}\omega_e\ttb_e\ttb_e^{T}(r_p)$. By the proof of Theorem \ref{thm:BMthm}, we have $\vert\la \calL^0\rho_p,\delta_q-\delta_p\ra\vert=2$. Since $\rho_p(p)=\min_{x}\rho_p(x)$, we have $\calL^0\rho_p(p)=\sum_{e\ni p}\omega_e(-1)=-d_p$. Combining these two facts, we obtain 
  \begin{align}
   2=\la \calL^0\rho_p,\delta_q-\delta_p\ra=\la \calL^0\rho_p,\delta_q\ra+1\;\Leftrightarrow \la \calL^0\rho_p,\delta_q\ra=1.\notag
  \end{align}
  By the same way, we also have $\la \calL^0\rho_q,\delta_p\ra=-\la \calL^0\rho_q,\delta_q\ra=1$. 
  \par Suppose there exists a point $v\in V$ such that 
  \begin{align}
   f(v):=\rho_p(v)+\rho_q(v)-R(v)\geq d_v>0.\label{eq:key0}
  \end{align} 
  By (\ref{eq:anotherrepolkappa}), 
  \begin{align}
   K\leq \ul{\kappa}(p,v)\leq \frac{\la \calL^0\rho_p,\delta_v-\delta_p\ra}{d(p,v)}\Leftrightarrow Kr_p(v)\leq \la \calL^0\rho_p,\delta_v\ra+1.\notag
  \end{align} 
  Multiplying $d_v$ leads 
  \begin{align}
   K\rho_p(v)\leq \calL^0\rho_p(v)+d_v.\label{eq:key1}
  \end{align}
  By a similar calculation, we have 
  \begin{align}
   K\rho_q(v)\leq \calL^0\rho_q(v)+d_v.\label{eq:key2}
  \end{align}
  Combining (\ref{eq:key0}), (\ref{eq:key1}), and (\ref{eq:key2}), we obtain 
  \begin{align}
   \calL^0\rho_p(v)+\calL^0\rho_q(v)+2d_v&\geq K\left(\rho_p(v)+\rho_q(v)\right)=K\left(R(v)+f(v)\right)\notag\\
   &=2d_v+\frac{2}{L}f(v).\notag
  \end{align}
  Set $A:=\{v\;;\; f(v)>0\}$. Summing up $v\in V$ leads, 
  \begin{align}
   \sum_{v\in V}\left(\calL^0 \rho_p(v)+\calL^0\rho_q(v)\right)\geq \frac{2}{L}\sum_{v\in V}f(v)\geq \frac{2}{L}\sum_{x\in A}f(v)=\frac{2}{L}\vol(A)>0.\notag
  \end{align}
  On the other hand 
  \begin{align}
   \sum_{v\in V}\left(\calL^0\rho_p(v)+\calL^0\rho_q(v)\right)=\la \calL^0\rho_p,D\iden\ra+\la\calL^0\rho_q,D\iden\ra=0.\notag
  \end{align}
  This contradicts. Hence each point is on a geodesic path connecting from $p$ to $q$. 
  
  Take a pair of points $v,w$ on a same geodesic path $\gamma$ connecting from $p$ to $q$. With out loss of generality, we may assume $v=\gamma_k$, $w=\gamma_l$ with $k<l$. Note that 
  \begin{align}
   \la \rho_p,\delta_w-\delta_v\ra=d(v,w),\notag
  \end{align}
  since $v,w$ are on $\gamma$. By (\ref{eq:anotherrepolkappa}), we have 
  \begin{align}
   &Kd(p,v)\leq \la \calL^0\rho_p,\delta_v-\delta_p\ra,\notag\\
   &Kd(v,w)\leq \la \calL^0\rho_p,\delta_w-\delta_v\ra,\notag\\
   &Kd(w,q)\leq \la \calL^0\rho_p,\delta_q-\delta_w\ra. \notag
  \end{align}
  Adding these inequalities, we have 
  \begin{align}
   Kd(p,q)=K\left(d(p,v)+d(v,w)+d(w,q)\right)\leq \la \calL^0\rho_p,\delta_q-\delta_p\ra=Kd(p,q).\notag
  \end{align}
  Hence all inequalities are actually equalities. Therefore 
  \begin{align}
   K\leq \ul{\kappa}(v,w)\leq \ol{\kappa}(v,w)\leq \frac{\la\calL^0\rho_p,\delta_w-\delta_v\ra}{d(v,w)}=K.\notag
  \end{align}
 \end{proof}
On Riemannian manifolds, Lichnerowicz-Obata's theorem is also known. It states that the first non-zero eigenvalue $\lambda_1$ of the Laplacian on $n$-dimensional Riemannian manifold $M$ with $\mathrm{Ric}_g\geq (n-1)$ satisfies $\lambda_1\geq n$ and $\lambda_1=n$ if and only if $M=\bbS^n(1)$. We have a weaker theorem, whose proof is similar to \cite{OSYmax}. 
 \begin{cor}
  Under the same assumption as in Theorem \ref{thm:mainCheng}, then $\lambda_1=K$. 
 \end{cor}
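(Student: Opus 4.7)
My plan is to prove $\lambda_1=K$ by establishing a Lichnerowicz-type lower bound $\lambda_1\geq K$ valid for every nontrivial eigenpair, and then explicitly constructing an eigenfunction with eigenvalue exactly $K$ built from the distance function $\rho_p$.

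For the lower bound, suppose $\lambda f\in\calL f$ with $D^{-1}f$ non-constant. The $1$-homogeneity of $\calL$ and $J_\mu$ from (\ref{eq:homoge}) lets me rescale $f$ so that $f\in\Lip_\omega^1(V)$ with equality $\la f,\delta_{x_0}-\delta_{y_0}\ra=d(x_0,y_0)$ attained at some pair $x_0\neq y_0$ (such a pair exists because $V$ is finite and $f$ is not constant). The inclusion $\lambda f\in\calL f$ gives $(1+\mu\lambda)f\in(I+\mu\calL)f$, so homogeneity of $J_\mu$ yields $J_\mu f=f/(1+\mu\lambda)$. Plugging this test function into the definition of $\KD_\mu$,
\[
\KD_\mu(x_0,y_0)\geq\la J_\mu f,\delta_{x_0}-\delta_{y_0}\ra=\frac{d(x_0,y_0)}{1+\mu\lambda},
\]
hence $\kappa_\mu(x_0,y_0)/\mu\leq\lambda/(1+\mu\lambda)$. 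Letting $\mu\downarrow 0$ gives $\ul\kappa(x_0,y_0)\leq\lambda$, and the curvature hypothesis closes the argument: $\lambda\geq K$.

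For the matching upper bound, I extract a sharper statement from the proof of Theorem \ref{thm:mainCheng}. Because every vertex $x\in V$ lies on a geodesic from $p$ to $q$, applying the chain-of-sharp-inequalities argument of that proof to the subsegment $p\to x$ upgrades the one-sided bound $K\rho_p(x)\leq\calL^0\rho_p(x)+d_x$ to the pointwise identity
\[
\calL^0\rho_p(x)=K\rho_p(x)-d_x\qquad\text{for every }x\in V,
\]
i.e.\ $\calL^0\rho_p=K\phi$ where $\phi:=\rho_p-(1/K)D\iden$. Next, because $\sfb^{\ttT}\iden=0$ for every $\sfb\in B_e$, the base-polytope maximizers defining $\calL$ do not change when a multiple of $D\iden$ is added to the argument, so $\calL\phi=\calL\rho_p$ as subsets of $\R^V$. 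Consequently $K\phi=\calL^0\rho_p\in\calL\rho_p=\calL\phi$, so $\phi$ is an eigenfunction with eigenvalue $K$. Since $\phi(p)/d_p=-1/K\neq 1/K=\phi(q)/d_q$, the function $D^{-1}\phi$ is non-constant, hence $\phi$ is nontrivial. Combining the two bounds yields $\lambda_1=K$.

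I expect the main obstacle to be establishing the global pointwise identity $\calL^0\rho_p=K\rho_p-D\iden$ from the proof of Theorem \ref{thm:mainCheng}, which is stated there only as the qualitative rigidity $\ul\kappa=\ol\kappa=K$ along geodesics. Converting this into the quantitative equality at every vertex requires retracing the argument to pinpoint which curvature inequalities become sharp and exploiting the fact that a single function $\rho_p$ serves as the extremal $1$-Lipschitz test function simultaneously along every $p$-to-$q$ geodesic. A secondary but genuine point is verifying $\calL\phi=\calL\rho_p$ from the raw definition of $\calL$, which rests on the orthogonality $\sfb^{\ttT}\iden=0$ for elements of the base polytopes.
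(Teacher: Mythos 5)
Your proposal is correct and follows essentially the same route as the paper: the lower bound $\lambda_1\geq K$ (which the paper imports as Theorem 5.1 of \cite{IKT}, and which you reprove directly via the homogeneity identity $J_\mu f=f/(1+\mu\lambda)$), combined with the explicit eigenfunction $\rho_p-K^{-1}D\iden$ obtained from the sharpness of $K\rho_p(v)\le \calL^0\rho_p(v)+d_v$ in the rigidity proof and the invariance $\calL(\rho_p-K^{-1}D\iden)=\calL\rho_p$ (the paper's citation of Lemma 2.3 in \cite{IKT}, which you verify from $\sfb^{\ttT}\iden=0$ for $\sfb\in B_e$). The only cosmetic difference is that you extract the pointwise equality $\calL^0\rho_p=K\rho_p-D\iden$ from the chain of equalities along geodesics rather than from the paper's summation argument forcing equality in (\ref{eq:key1}) and (\ref{eq:key2}); both are valid readings of the proof of Theorem \ref{thm:mainCheng}.
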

 \begin{proof}
  We have $\lambda_1\geq K$ by Theorem 5.1 in \cite{IKT}. By the proof of Theorem \ref{thm:mainCheng}, inequalities (\ref{eq:key1}) and (\ref{eq:key2}) are actually equalities, that is, $K\rho_p(v)=\calL^0\rho_p(v)+d_v$. Set $f:=\rho_p-K^{-1}D\iden$. Since $\calL^0(\rho_p-K^{-1}D\iden)=\calL^0\rho_p$ (Lemma 2.3 in \cite{IKT}), then 
  \begin{align}
   \calL^0f(v)=\calL^0(\rho_p-K^{-1}D\iden)(v)=\calL^0(\rho_p)(v)=K\rho_p(v)-d_v=Kf(v).\notag
  \end{align}
  Hence $\lambda_1=K$. 
 \end{proof}
 \begin{rem}
  Since (\ref{eq:key2}) is also equality, $\rho_q-K^{-1}D\iden$ is also an eigenfunction for $\lambda_1=K$. However due to the nonlinearity of $\calL^0$, the set of eigenfunctions is not a linear space.  
 \end{rem}
%
%
\section{Examples}
\begin{example}
Let $V:=\{p,q,v_1,v_2\}$ be a vertex set and $E_1:=\{e_i\}_{i=1}^7$ a hyperedge set, where 
\begin{align}
 e_1=pv_1,\;e_2=pv_2,\;e_3=v_1v_2,\;e_4=v_1q,\;e_5=v_2q,\;e_6=pv_1v_2,\;e_7=v_1v_2q.\notag
\end{align}
Define a weight function $\omega:E_1\rightarrow\R_{>0}$ by $\omega_e\equiv 1$. Then the weighted hypergraph $H_1=(V,E_1,\omega)$ satisfies $\inf_{x\neq y}\ul{\kappa}(x,y)\geq 1$, and $\di V=2$. Thus $H_1$ satisfies the assumption of Theorem \ref{thm:mainCheng}. Actually $v_1,v_2$ is on a geodesic from $p$ to $q$, and $\ul{\kappa}(p,v_i)=\ul{\kappa}(v_i,q)=1$ is satisfied. 
\begin{figure}[h]
       \centering
        \includegraphics[width=8.5cm]{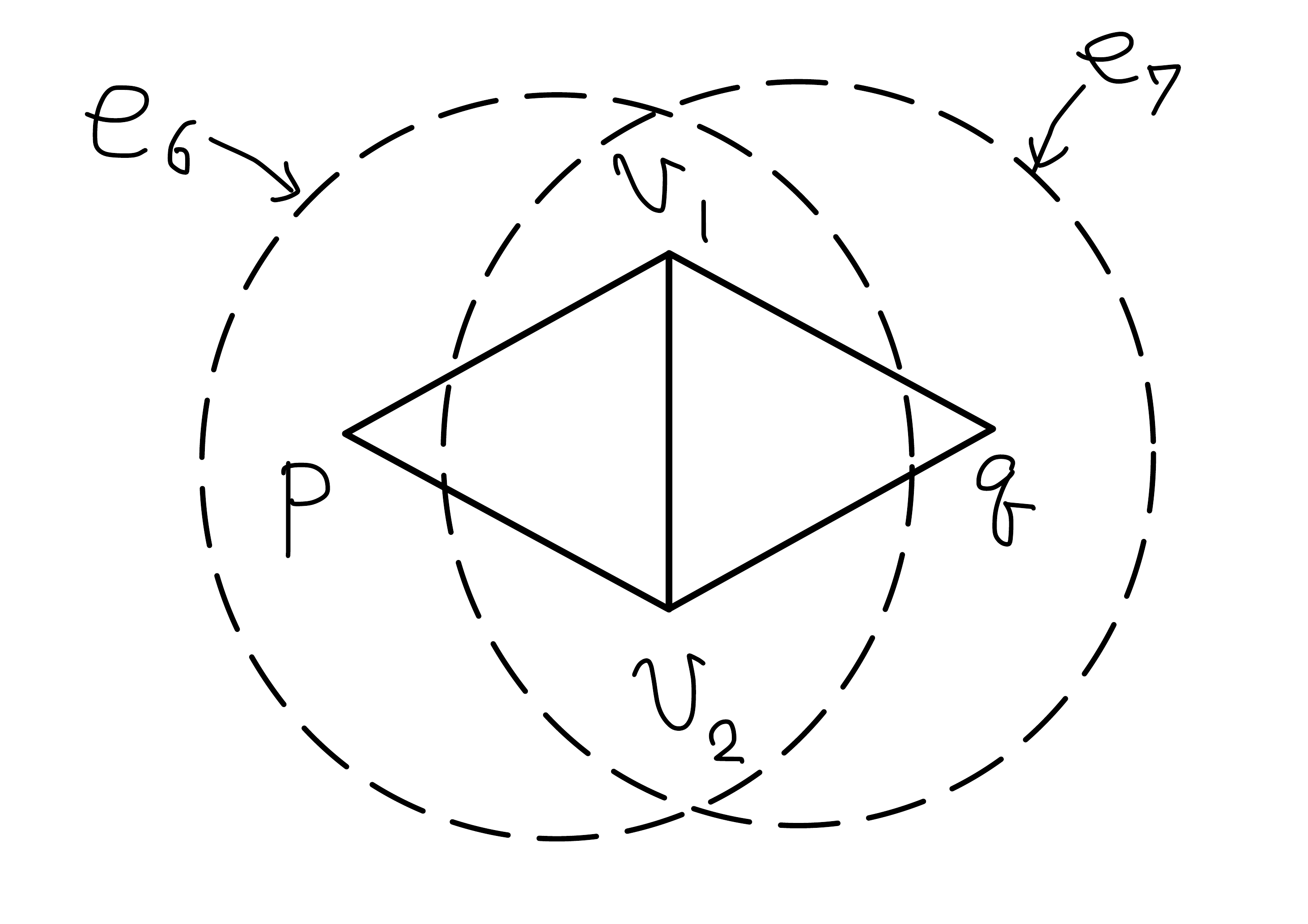}
       \caption{\footnotesize{$H_1=(V,E_1)$}}
\end{figure}
\end{example}
 \begin{example}
  Let $V:=\{p,q,v_1,v_2\}$ be a vertex set and $E_2:=\{e_1,e_2\}$ a hyperedge set, where 
\begin{align}
 e_1=pv_1v_2,\;e_2=v_1v_2q.\notag
\end{align}
Define a weight function $\omega:E_2\rightarrow\R_{>0}$ by $\omega_e\equiv 1$. Then the weighted hypergraph $H_2=(V,E_2,\omega)$ satisfies $\inf_{x\neq y}\ul{\kappa}(x,y)\geq 1$, and $\di V=2$. Thus $H_2$ satisfies the assumption of Theorem \ref{thm:mainCheng}. Actually $v_1,v_2$ is on a geodesic from $p$ to $q$, and $\ul{\kappa}(p,v_i)=\ul{\kappa}(v_i,q)=1$ is satisfied. 
\begin{figure}[h]
       \centering
        \includegraphics[width=8.5cm]{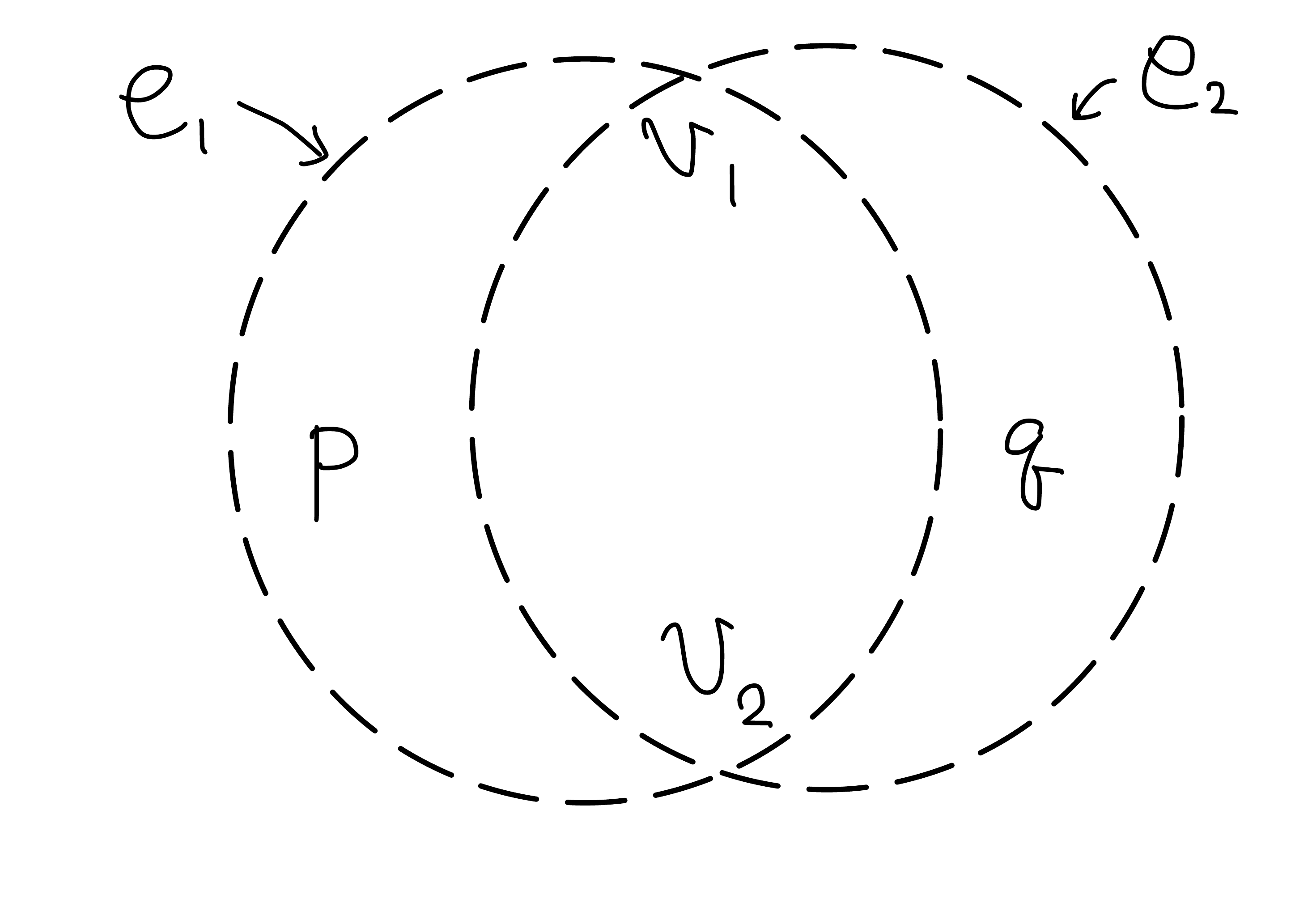}
       \caption{\footnotesize{$H_2=(V,E_2)$}}
\end{figure}

 \end{example}

\section*{Acknowledgement}
The authors would like to thank Professors Shin-ichi Ohta and Yohei Sakurai for their helpful comments and fruitful discussions. The first author is supported by Grant-in-Aid for Young Scientists JP18K13412. 
\begin{bibdiv}
\begin{biblist}


\bib{AGSbercd}{article}{
   author={Ambrosio, Luigi},
   author={Gigli, Nicola},
   author={Savar{\'e}, Giuseppe},
   title={Bakry-\'Emery curvature-dimension condition and Riemannian Ricci
   curvature bounds},
   journal={Ann. Probab.},
   volume={43},
   date={2015},
   number={1},
   pages={339--404},
   issn={0091-1798},
   review={\MR{3298475}},
   doi={10.1214/14-AOP907},
}

\bib{AGScal}{article}{
   author={Ambrosio, Luigi},
   author={Gigli, Nicola},
   author={Savar{\'e}, Giuseppe},
   title={Calculus and heat flow in metric measure spaces and applications
   to spaces with Ricci bounds from below},
   journal={Invent. Math.},
   volume={195},
   date={2014},
   number={2},
   pages={289--391},
   issn={0020-9910},
   review={\MR{3152751}},
   doi={10.1007/s00222-013-0456-1},
}

\bib{AGSRiem}{article}{
   author={Ambrosio, Luigi},
   author={Gigli, Nicola},
   author={Savar{\'e}, Giuseppe},
   title={Metric measure spaces with Riemannian Ricci curvature bounded from
   below},
   journal={Duke Math. J.},
   volume={163},
   date={2014},
   number={7},
   pages={1405--1490},
   issn={0012-7094},
   review={\MR{3205729}},
   doi={10.1215/00127094-2681605},
}

\bib{AGMR}{article}{
   author={Ambrosio, Luigi},
   author={Gigli, Nicola},
   author={Mondino, Andrea},
   author={Rajala, Tapio},
   title={Riemannian Ricci curvature lower bounds in metric measure spaces
   with $\sigma$-finite measure},
   journal={Trans. Amer. Math. Soc.},
   volume={367},
   date={2015},
   number={7},
   pages={4661--4701},
   issn={0002-9947},
   review={\MR{3335397}},
   doi={10.1090/S0002-9947-2015-06111-X},
}


\bib{AMSnon}{article}{
   author={Ambrosio, Luigi},
   author={Mondino, Andrea},
   author={Savar\'{e}, Giuseppe},
   title={Nonlinear diffusion equations and curvature conditions in metric
   measure spaces},
   journal={Mem. Amer. Math. Soc.},
   volume={262},
   date={2019},
   number={1270},
   pages={v+121},
   issn={0065-9266},
   isbn={978-1-4704-3913-2},
   isbn={978-1-4704-5513-2},
   review={\MR{4044464}},
   doi={10.1090/memo/1270},
}




\bib{BHLLMY}{article}{
   author={Bauer, Frank},
   author={Horn, Paul},
   author={Lin, Yong},
   author={Lippner, Gabor},
   author={Mangoubi, Dan},
   author={Yau, Shing-Tung},
   title={Li-Yau inequality on graphs},
   journal={J. Differential Geom.},
   volume={99},
   date={2015},
   number={3},
   pages={359--405},
   issn={0022-040X},
   review={\MR{3316971}},
}


\bib{BLsobolev}{article}{
   author={Bakry, D.},
   author={Ledoux, M.},
   title={Sobolev inequalities and Myers's diameter theorem for an abstract
   Markov generator},
   journal={Duke Math. J.},
   volume={85},
   date={1996},
   number={1},
   pages={253--270},
   issn={0012-7094},
   review={\MR{1412446}},
   doi={10.1215/S0012-7094-96-08511-7},
}

\bib{BQvol}{article}{
   author={Bakry, Dominique},
   author={Qian, Zhongmin},
   title={Volume comparison theorems without Jacobi fields},
   conference={
      title={Current trends in potential theory},
   },
   book={
      series={Theta Ser. Adv. Math.},
      volume={4},
      publisher={Theta, Bucharest},
   },
   date={2005},
   pages={115--122},
   review={\MR{2243959}},
}


\bib{BG}{article}{
   author={Baudoin, Fabrice},
   author={Garofalo, Nicola},
   title={Curvature-dimension inequalities and Ricci lower bounds for
   sub-Riemannian manifolds with transverse symmetries},
   journal={J. Eur. Math. Soc. (JEMS)},
   volume={19},
   date={2017},
   number={1},
   pages={151--219},
   issn={1435-9855},
   review={\MR{3584561}},
   doi={10.4171/JEMS/663},
}

\bib{Bon}{article}{
   author={Bonciocat, Anca-Iuliana},
   title={A rough curvature-dimension condition for metric measure spaces},
   journal={Cent. Eur. J. Math.},
   volume={12},
   date={2014},
   number={2},
   pages={362--380},
   issn={1895-1074},
   review={\MR{3130690}},
   doi={10.2478/s11533-013-0332-7},
}

\bib{BonS}{article}{
   author={Bonciocat, Anca-Iuliana},
   author={Sturm, Karl-Theodor},
   title={Mass transportation and rough curvature bounds for discrete
   spaces},
   journal={J. Funct. Anal.},
   volume={256},
   date={2009},
   number={9},
   pages={2944--2966},
   issn={0022-1236},
   review={\MR{2502429}},
   doi={10.1016/j.jfa.2009.01.029},
}

\bib{CKKLMP}{article}{
   author={Cushing, D.},
   author={Kamtue, S.},
   author={Koolen, J.},
   author={Liu, S.},
   author={M\"{u}nch, F.},
   author={Peyerimhoff, N.},
   title={Rigidity of the Bonnet-Myers inequality for graphs with respect to
   Ollivier Ricci curvature},
   journal={Adv. Math.},
   volume={369},
   date={2020},
   pages={107188, 53},
   issn={0001-8708},
   review={\MR{4096132}},
   doi={10.1016/j.aim.2020.107188},
}


\bib{EKS}{article}{
   author={Erbar, Matthias},
   author={Kuwada, Kazumasa},
   author={Sturm, Karl-Theodor},
   title={On the equivalence of the entropic curvature-dimension condition
   and Bochner's inequality on metric measure spaces},
   journal={Invent. Math.},
   volume={201},
   date={2015},
   number={3},
   pages={993--1071},
   issn={0020-9910},
   review={\MR{3385639}},
   doi={10.1007/s00222-014-0563-7},
}

\bib{EM}{article}{
   author={Erbar, Matthias},
   author={Maas, Jan},
   title={Ricci curvature of finite Markov chains via convexity of the
   entropy},
   journal={Arch. Ration. Mech. Anal.},
   volume={206},
   date={2012},
   number={3},
   pages={997--1038},
   issn={0003-9527},
   review={\MR{2989449}},
   doi={10.1007/s00205-012-0554-z},
}

\bib{IKT}{article}{
   author={Ikeda, Masahiro},
   author={Kitabeppu, Yu},
   author={Takai, Yuuki},
   title={Coarse Ricci curvature of hypergraphs and its generalization},
   journal={arXiv:2102.00698},
}

\bib{IMTY}{article}{
   author={Ikeda, Masahiro},
   author={Miyauchi, Atsushi},
   author={Takai, Yuuki},
   author={Yoshida, Yuichi},
   title={Finding Cheeger cuts in hypergraphs via heat equation},
   journal={arXiv:1809.04396},
}



\bib{Kemax}{article}{
   author={Ketterer, Christian},
   title={Cones over metric measure spaces and the maximal diameter theorem},
   language={English, with English and French summaries},
   journal={J. Math. Pures Appl. (9)},
   volume={103},
   date={2015},
   number={5},
   pages={1228--1275},
   issn={0021-7824},
   review={\MR{3333056}},
   doi={10.1016/j.matpur.2014.10.011},
}


\bib{KeR}{article}{
  author={Ketterer, Christian},
  author={Rajala, Tapio},
  title={Failure of topological rigidity results for the measure contraction property},
  journal={Potential Analysis},
  volume={42},
  number={3},
  pages={645--655},
  year={2015},
  publisher={Springer},
}


\bib{Kumax}{article}{
   author={Kuwada, Kazumasa},
   title={A probabilistic approach to the maximal diameter theorem},
   journal={Math. Nachr.},
   volume={286},
   date={2013},
   number={4},
   pages={374--378},
   issn={0025-584X},
   review={\MR{3028781}},
   doi={10.1002/mana.201100330},
}




\bib{Limyers}{article}{
   author={Li, Xue-Mei},
   title={On extensions of Myers' theorem},
   journal={Bull. London Math. Soc.},
   volume={27},
   date={1995},
   number={4},
   pages={392--396},
   issn={0024-6093},
   review={\MR{1335292}},
   doi={10.1112/blms/27.4.392},
}

\bib{LW}{article}{
   author={Li, Xue-Mei},
   author={Wang, Feng-Yu},
   title={On the compactness of manifolds},
   journal={Infin. Dimens. Anal. Quantum Probab. Relat. Top.},
   volume={6},
   date={2003},
   number={suppl.},
   pages={29--38},
   issn={0219-0257},
   review={\MR{2074765}},
   doi={10.1142/S0219025703001249},
}

\bib{LLY}{article}{
   author={Lin, Yong},
   author={Lu, Linyuan},
   author={Yau, Shing-Tung},
   title={Ricci curvature of graphs},
   journal={Tohoku Math. J. (2)},
   volume={63},
   date={2011},
   number={4},
   pages={605--627},
   issn={0040-8735},
   review={\MR{2872958}},
   doi={10.2748/tmj/1325886283},
}

\bib{LV}{article}{
   author={Lott, John},
   author={Villani, C{\'e}dric},
   title={Ricci curvature for metric-measure spaces via optimal transport},
   journal={Ann. of Math. (2)},
   volume={169},
   date={2009},
   number={3},
   pages={903--991},
   issn={0003-486X},
   review={\MR{2480619 (2010i:53068)}},
   doi={10.4007/annals.2009.169.903},
}

\bib{Maas}{article}{
   author={Maas, Jan},
   title={Gradient flows of the entropy for finite Markov chains},
   journal={J. Funct. Anal.},
   volume={261},
   date={2011},
   number={8},
   pages={2250--2292},
   issn={0022-1236},
   review={\MR{2824578}},
   doi={10.1016/j.jfa.2011.06.009},
}

\bib{Mi}{book}{
   author={Miyadera, Isao},
   title={Nonlinear semigroups},
   series={Translations of Mathematical Monographs},
   volume={109},
   note={Translated from the 1977 Japanese original by Choong Yun Cho},
   publisher={American Mathematical Society, Providence, RI},
   date={1992},
   pages={viii+231},
   isbn={0-8218-4565-9},
   review={\MR{1192132}},
   doi={10.1090/mmono/109},
}

\bib{Munch1}{article}{
   author={M\"{u}nch, Florentin},
   title={Remarks on curvature dimension conditions on graphs},
   journal={Calc. Var. Partial Differential Equations},
   volume={56},
   date={2017},
   number={1},
   pages={Paper No. 11, 8},
   issn={0944-2669},
   review={\MR{3592766}},
   doi={10.1007/s00526-016-1104-6},
}
\bib{Munch2}{article}{
   author={M\"{u}nch, Florentin},
   title={Li-Yau inequality on finite graphs via non-linear curvature
   dimension conditions},
   journal={J. Math. Pures Appl. (9)},
   volume={120},
   date={2018},
   pages={130--164},
   issn={0021-7824},
   review={\MR{3906157}},
   doi={10.1016/j.matpur.2018.10.006},
}


\bib{Ohmcp}{article}{
   author={Ohta, Shin-ichi},
   title={On the measure contraction property of metric measure spaces},
   journal={Comment. Math. Helv.},
   volume={82},
   date={2007},
   number={4},
   pages={805--828},
   issn={0010-2571},
   review={\MR{2341840 (2008j:53075)}},
   doi={10.4171/CMH/110},
}

\bib{Ohpro}{article}{
   author={Ohta, Shin-Ichi},
   title={Products, cones, and suspensions of spaces with the measure
   contraction property},
   journal={J. Lond. Math. Soc. (2)},
   volume={76},
   date={2007},
   number={1},
   pages={225--236},
   issn={0024-6107},
   review={\MR{2351619}},
   doi={10.1112/jlms/jdm057},
}



\bib{O}{article}{
   author={Ollivier, Yann},
   title={Ricci curvature of Markov chains on metric spaces},
   journal={J. Funct. Anal.},
   volume={256},
   date={2009},
   number={3},
   pages={810--864},
   issn={0022-1236},
   review={\MR{2484937 (2010j:58081)}},
   doi={10.1016/j.jfa.2008.11.001},
}

\bib{OSY}{article}{
   author={Ozawa, Ryunosuke},
   author={Sakurai, Yohei},
   author={Yamada, Taiki},
   title={Geometric and spectral properties of directed graphs under a lower
   Ricci curvature bound},
   journal={Calc. Var. Partial Differential Equations},
   volume={59},
   date={2020},
   number={4},
   pages={Paper No. 142, 39},
   issn={0944-2669},
   review={\MR{4135639}},
   doi={10.1007/s00526-020-01809-2},
}

\bib{OSYmax}{article}{
   author={Ozawa, Ryunosuke},
   author={Sakurai, Yohei},
   author={Yamada, Taiki},
   title={Maximal diameter theorem for directed graphs of positive Ricci curvature},
   journal={arXiv:2011.00755},
}



\bib{Ruan}{article}{
   author={Ruan, Qi-hua},
   title={Two rigidity theorems on manifolds with Bakry-Emery Ricci
   curvature},
   journal={Proc. Japan Acad. Ser. A Math. Sci.},
   volume={85},
   date={2009},
   number={6},
   pages={71--74},
   issn={0386-2194},
   review={\MR{2532422}},
   doi={10.3792/pjaa.85.71},
}



\bib{Schm}{article}{
   author={Schmuckenschl\"{a}ger, Michael},
   title={Curvature of nonlocal Markov generators},
   conference={
      title={Convex geometric analysis},
      address={Berkeley, CA},
      date={1996},
   },
   book={
      series={Math. Sci. Res. Inst. Publ.},
      volume={34},
      publisher={Cambridge Univ. Press, Cambridge},
   },
   date={1999},
   pages={189--197},
   review={\MR{1665591}},
}



\bib{Stmms1}{article}{
   author={Sturm, Karl-Theodor},
   title={On the geometry of metric measure spaces. I},
   journal={Acta Math.},
   volume={196},
   date={2006},
   number={1},
   pages={65--131},
   issn={0001-5962},
   review={\MR{2237206 (2007k:53051a)}},
   doi={10.1007/s11511-006-0002-8},
}

\bib{Stmms2}{article}{
   author={Sturm, Karl-Theodor},
   title={On the geometry of metric measure spaces. II},
   journal={Acta Math.},
   volume={196},
   date={2006},
   number={1},
   pages={133--177},
   issn={0001-5962},
   review={\MR{2237207 (2007k:53051b)}},
   doi={10.1007/s11511-006-0003-7},
}



\bib{Y}{article}{
   author={Yoshida, Yuichi},
   title={Cheeger inequalities for submodular transformations},
   conference={
      title={Proceedings of the Thirtieth Annual ACM-SIAM Symposium on
      Discrete Algorithms},
   },
   book={
      publisher={SIAM, Philadelphia, PA},
   },
   date={2019},
   pages={2582--2601},
   review={\MR{3909629}},
   doi={10.1137/1.9781611975482.160},
}

\end{biblist}
\end{bibdiv}
\end{document}